\documentclass[12pt, a4paper, leqno]{amsart}

\usepackage[utf8]{inputenc}
\usepackage{amsmath} 
\usepackage{amsfonts}
\usepackage{amssymb}
\usepackage{txfonts}   
\usepackage{amsfonts}     
\usepackage{graphicx}     
\usepackage[dvipsnames]{xcolor}
\usepackage[colorlinks, allcolors=RedViolet]{hyperref}

\newcommand{\R}{\varmathbb{R}}

\newcommand{\Rn}{{\varmathbb{R}^n}}
\newcommand{\Rk}{\varmathbb{R}^2}

\newcommand{\Ha}{\mathcal{H}}

\newcommand{\ve}{\varepsilon}

\def\phi{\varphi}

\let\oldmarginpar\marginpar
\renewcommand\marginpar[1]{\-\oldmarginpar[\raggedleft\footnotesize #1]%
{\raggedright\footnotesize #1}}

\usepackage{amsthm}

\swapnumbers
\theoremstyle{plain}
\newtheorem{theorem}[equation]{Theorem}
\newtheorem{lemma}[equation]{Lemma}
\newtheorem{proposition}[equation]{Proposition}
\newtheorem{corollary}[equation]{Corollary}

\theoremstyle{definition}

\newtheorem{example}[equation]{Example}

\theoremstyle{remark}
\newtheorem{remark}[equation]{Remark}

\numberwithin{equation}{section}

\title[A note on  Sobolev  inequalities in the lower limit case ]{A note on  Sobolev  inequalities \\ in the lower limit case }

\author{Petteri Harjulehto}
\address[Petteri Harjulehto]{Department of Mathematics and Statistics,
FI-00014 University of Helsinki, Finland}
\email{petteri.harjulehto@helsinki.fi}

\author{Ritva Hurri-Syrj\"anen}
\address[Ritva Hurri-Syrj\"anen]{Department of Mathematics and Statistics,
FI-00014 University of Helsinki, Finland}
\email{ritva.hurri-syrjanen@helsinki.fi}

\date{\today}

\begin{document}

\keywords{Choquet integral, Hausdorff content, Sobolev inequality with the limit integrability, superlevel Sobolev inequality}
\subjclass[2020]{46E35, 31C15, 42B20, 26D10}

\begin{abstract} 
We study Poincare-Sobolev  type inequalities for 
compactly supported smooth functions which are  defined in the Euclidean $n$-space  and whose absolute value of gradient are Choquet
$\delta /n$-integrable with respect to the $\delta$-dimensional Hausdorff content,
$n\geq 2$, $\delta\in (0,n]$.
In particular, our results imply a new Sobolev inequality for  quasicontinuous functions defined in  the Sobolev space $W^{1,1}_0(\Rn$).
As an application we extend a recently introduced superlevel Sobolev inequality into a context of  the Hausdorff content.

\end{abstract}

\maketitle

\section{Introduction}\label{Intro}

Let us recall 
the following Poincar\'e-Sobolev inequality 
 in the sense of Choquet with respect to the $\delta$-dimensional Hausdorff content $\Ha^\delta_\infty$
from \cite[Theorem 4.2 (b)]{HH-S_JFA}.
If $n\geq 2$, $\delta \in (0, n]$, $p\in(\delta/n, \delta)$, and $\kappa \in [0, 1)$, then
for every smooth function $u$ with  compact support in $\Rn$
\begin{equation}\label{classical}
\Bigg( \int_\Rn |u|^{\frac{p(\delta- \kappa p)}{\delta -p}} \, d \Ha^{\delta- \kappa p}_\infty \Bigg)^{\frac{\delta -p}{p(\delta- \kappa p)}}
\le c \Bigg( \int_\Rn |\nabla u|^p \, d \Ha^\delta_\infty \Bigg)^{\frac1p}
\end{equation}
where $c$ is a constant which depends only on $n$, $\delta$, $p$, and $\kappa$.
In particular, if  we choose $\delta =n$ and $\kappa =0$, then
the inequality \eqref{classical} reduces to the well-known  Sobolev inequality for smooth functions
with compact support which can be found in \cite[Proof of Theorem 7.10]{GT}, for example.
The exponent   $\frac{p(\delta- \kappa p)}{\delta -p}$  in \eqref{classical}
is the best possible, we refer to \cite[Example 3.14]{HH-S_JFA}.

In the present paper we study  inequalities  corresponding to the inequality \eqref{classical} when on the right-hand side 
to the absolute value of the gradient $|\nabla u| $ the exponent
$p=\delta /n$ is allowed.
We point out that
the case $\delta=n$, $p=1$, $\kappa=1$ is already solved in the famous inequality of D. R. Adams
\cite[Proposition~7, p.~122]{Adams1986}.
 This reappears also as a special case in 
 \cite[Theorem 1.7]{PonceSpector2020} where fractional derivatives are considered.
Also, the  special case  $p=\delta /n$, $\delta =n-1$, and $\kappa =0$ has been proved in \cite[Remark 4.3]{HH-S_JFA} based on Adams's inequality.
But the strong Sobolev inequality in the general limit case $p=\delta /n$ seems to be missing in  the sense of Choquet  with respect to the Hausdorff content.
Many other important  integral inequalities 
in the sense of Choquet with respect to the Hausdorff content and some other capacities  have been proven recently in
\cite{PonceSpector2020},
\cite{ChenOoiSpector2023},
\cite{ChenSpector2023},
\cite{OoiPhuc2022},
\cite{HH-S_JFA},
\cite{HCYZ},
\cite{HKST2024}, \cite{HKST2025},
\cite{HH-S_La}, \cite{HH-S_AWM},
\cite{ChenClaros2025},
\cite{Chen2025},
\cite{BCR2025},
\cite{ChenClaros2026}.

 In particular in the present paper, 
for values of $\delta$  near the dimension $n$ we obtain the following corollary from our Theorem \ref{thm:limit_case}.

\begin{corollary}\label{improvement}
Let    $n\geq 2$, $\kappa \in [0, 1]$, and $\frac{n-1}{1-\frac{\kappa }{n}} \le \delta \le n$.
Then there exists a constant $c=c(n, \delta, \kappa )$ such that
\begin{equation}\label{interesting}
\bigg ( \int_\Rn |u|^{\frac{\delta- \kappa \frac{\delta}{n}}{n-1}} \, d \Ha^{\delta- \kappa \frac{\delta}{n}}_\infty \bigg)^{\frac{n-1}{\delta- \kappa \frac{\delta}{n}}} \le
c\bigg( \int_\Rn |\nabla u|^{\frac{\delta}{n}} d \Ha^{\delta}_\infty \bigg)^{\frac{n}{\delta}}
\end{equation}
for  all smooth functions $u$ with compact support  in $\Rn$.
\end{corollary}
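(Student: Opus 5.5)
The plan is to chain three inequalities, so that the limit case reduces to the classical endpoint result of Adams: first compare Hausdorff contents of different dimensions on the gradient side, then apply Adams's inequality, then compare contents again on the function side. Write $\gamma := \delta - \kappa\frac{\delta}{n} = \delta\left(1-\frac{\kappa}{n}\right)$. The hypothesis $\frac{n-1}{1-\frac{\kappa}{n}} \le \delta$ says exactly that $\gamma \ge n-1$. Also $\gamma \le \delta \le n$. The exponent on the left-hand side of \eqref{interesting} is $\gamma/(n-1) \ge 1$, and the outer exponent is its reciprocal. This suggests an embedding of Choquet integrals between the $(n-1)$- and $\gamma$-dimensional contents.

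\textbf{Step 1 (content comparison).} I will use the standard comparison lemma: for $0<\beta\le\alpha\le n$ and any nonnegative $f$,
\begin{equation*}
\Big(\int_\Rn f^{\alpha/\beta}\, d\Ha^\alpha_\infty\Big)^{\beta/\alpha} \le c(n,\alpha,\beta) \int_\Rn f \, d\Ha^\beta_\infty .
\end{equation*}
For $f=\chi_E$ this is the elementary inequality $\Ha^\alpha_\infty(E)^{\beta/\alpha}\le \Ha^\beta_\infty(E)$. It holds because, for any cover by balls of radii $r_i$, one has $\sum r_i^\alpha \le (\sum r_i^\beta)^{\alpha/\beta}$ when $\alpha/\beta\ge1$.

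The passage to general $f$ is the main obstacle. Choquet integrals are not additive, so one cannot simply sum level sets. I would first try the route used for Hausdorff contents (Orobitg--Verdera, Adams), which works for the dyadic content $\tilde\Ha^\beta_\infty$, equivalent to $\Ha^\beta_\infty$ up to dimensional constants and strongly subadditive. The steps are:
\begin{itemize}
\item write $f$ via its layer cake and discretize dyadically, $f\approx\sum_k 2^k\chi_{E_k}$ with $E_k=\{f>2^k\}$;
\item use $\int f^{q}\,d\Ha^\alpha_\infty \approx \sum_k 2^{kq}\Ha^\alpha_\infty(E_k)$ with $q=\alpha/\beta$;
\item apply the set inequality to each $E_k$;
\item close with $\sum_k a_k^{q}\le(\sum_k a_k)^{q}$ for $q\ge1$.
\end{itemize}
If this lemma already appears earlier in the paper or in \cite{HH-S_JFA}, I will simply cite it.

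\textbf{Step 2 (gradient side).} Apply Step 1 with $\alpha=n$, $\beta=\delta$, and $f=|\nabla u|^{\delta/n}$. Since $\Ha^n_\infty$ is comparable to Lebesgue measure, this gives
\begin{equation*}
\int_\Rn |\nabla u|\,dx \le c\Big(\int_\Rn |\nabla u|^{\delta/n}\, d\Ha^\delta_\infty\Big)^{n/\delta}.
\end{equation*}

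\textbf{Step 3 (Adams).} Use the case $\delta=n$, $p=1$, $\kappa=1$ of the introduction, namely Adams's inequality \cite[Proposition~7]{Adams1986}:
\begin{equation*}
\int_\Rn |u|\, d\Ha^{n-1}_\infty \le c\int_\Rn|\nabla u|\,dx \quad\text{for } u\in C_0^\infty(\Rn).
\end{equation*}

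\textbf{Step 4 (function side).} Apply Step 1 again with $\alpha=\gamma$, $\beta=n-1$, and $f=|u|$; this is admissible because $n-1\le\gamma\le n$. It yields
\begin{equation*}
\Big(\int_\Rn |u|^{\gamma/(n-1)}\, d\Ha^\gamma_\infty\Big)^{(n-1)/\gamma}\le c\int_\Rn|u|\,d\Ha^{n-1}_\infty .
\end{equation*}

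Chaining Steps 4, 3 and 2 gives \eqref{interesting}, with a constant depending only on $n$, $\delta$ and $\kappa$. The restriction $\delta\ge \frac{n-1}{1-\kappa/n}$ enters only in Step 4, where it guarantees $\gamma\ge n-1$, and this explains why the corollary is stated for $\delta$ near $n$.
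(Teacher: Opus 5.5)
Your proposal is correct and is essentially the paper's proof. Your Step 1 is Lemma~\ref{GeneralizationOV} applied to $f^{\alpha/\beta}$ with $\delta_1=\beta$, $\delta_2=\alpha$, and the paper chains that lemma on the function side, Theorem~\ref{thm:Adams}, \eqref{compare_int}, and the lemma again on the gradient side, exactly as in your Steps 4, 3 and 2.

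The only difference is where the chain is run. The paper runs it in Theorem~\ref{thm:limit_case} for $1$-quasicontinuous $u\in W^{1,1}_0(\Rn)$ and then deduces the corollary from $C^\infty_0(\Rn)\subset W^{1,1}_0(\Rn)$. You work directly with smooth $u$, which is all the statement requires.
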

In Remark \ref{connections} we point out that with a choice of $\delta =n$ and $\kappa =0$ in  the inequality \eqref{interesting}
the well-known limit case of the Sobolev inequality for smooth functions  is recovered, that is the case $p=1$, which can be found for example in 
\cite[Proof for Theorem 7.10]{GT}.

In fact, we prove our results for  more general functions than smooth functions, namely  quasicontinuous Sobolev functions,
Theorem \ref{thm:new-SP-W}  and  Theorem \ref{thm:limit_case}.
Quasicontinuity is defined with respect to the Sobolev capacity \eqref{Sobolevcapacity}.
We also show that our results are essentially sharp  in Example \ref{first_example} and Example \ref{second_example}.
For readers' convenience we recall some basic properties of Choquet integral theory in Section \ref{sec:Pre} 
and  some properties of Sobolev functions in Section \ref{Limiting}.
The Poincar\'e-Sobolev inequalities are considered  in Section \ref{Limiting}, where we also compare our results to earlier known inequalities.
In Section \ref{SuperlevelSection} we give an application of our results, Theorem \ref{thm:superlevelS}.
This theorem generalises a  recently introduced 
superlevel Sobolev inequality by I.\ Kangasniemi and J.\ Onninen in \cite[Proposition~1.7]{KO}
  to a  context  the  Hausdorff content.

\section{Preliminaries}\label{sec:Pre}

In this paper the dimension of the space $\Rn$ 
is  assumed to be at least two. 

We recall the definition of the  $\delta$-dimensional Hausdorff content for any given set $E$ 
in $\Rn$,  \cite[2.10.1, p.~169]{Federer}, \cite{Adams1998, Adams2015}.
An  open ball centred at $x$ with radius $r>0$ is written as $B(x,r)$.
Suppose that $\delta \in (0, n]$.
The $\delta$-dimensional Hausdorff content of $E$ is defined by
\begin{equation}
\Ha_\infty^{\delta} (E) := \inf \bigg\{ \sum_{i=1}^\infty r_i^{\delta}: E \subset \bigcup_{i=1}^\infty B(x_i, r_i)\bigg\}\,,\label{HausdorffC}
\end{equation}
where the infimum is taken over all  
countable or finite collections of balls  that cover $E$.
The quantity \eqref{HausdorffC} is also called  the $\delta$-Hausdorff capacity.

The $\delta$-dimensional Hausdorff content 
$\Ha_\infty^{\delta}$   is an outer capacity in the sense of N.  G. Meyers \cite[p. 257]{Mey70}. 
We refer to \cite[p. 30]{Dav56} and \cite{SioS62, Adams2015}. The following property of the outer capacity is crucial:
 if $(K_i)$ is a decreasing sequence of compact sets then 
\[
\Ha_\infty^{\delta}\Big(\bigcap_{i=1}^\infty K_i \Big)
= \lim_{i \to \infty} \Ha_\infty^{\delta}(K_i).
\]

For all  Lebesgue measurable  sets $E$ in $\Rn$, the $n$-dimensional Hausdorff content  $\Ha^n_\infty (E)$ is comparable with the   Lebesgue measure of $E$ 
 which is written as $|E|$. The constants 
in the comparison inequalities
depend only on $n$.  We refer to \cite[Proposition~2.5]{HH-S_JFA} 
and \cite[Chapter~3, p.~14]{Adams1998}.

Let us recall that the $\delta$-dimensional  Hausdorff (outer) measure for a subset $E$ of  $\R^n$ is defined as
\begin{equation*}
\Ha^\delta (E) := \lim_{\rho \to 0^+}  \inf \bigg\{ \sum_{i=1}^\infty r_i^{\delta}: E \subset \bigcup_{i=1}^\infty B(x_i, r_i) \text{ and } r_i \le \rho \text{ for all } i\bigg\},
\end{equation*}
where the infimum is taken over  all  countable  or finite balls of radius at most $\rho$ that cover $E$. 
The $\delta$-dimensional Hausdorff content $\Ha^\delta_\infty$ and the $\delta$-dimensional Hausdorff measure $\Ha^\delta$ 
have the same null sets \cite[Chapter 3, p.~14]{Adams2015}.

We recall the definition of Choquet integral.
For any function $f:\Rn\to [0,\infty]$ the integral in the sense of Choquet with respect to Hausdorff content is defined by
\begin{equation*}
\int_\Rn f \, d \Ha^\delta_\infty := \int_0^\infty \Ha^\delta_\infty\big(\{x \in \Rn : f(x)>t\}\big) \, dt, 
\end{equation*}
where  on the right-hand side is the one dimensional Lebesgue integral.
Note that the Choquet integral with respect to Hausdorff content is non-linear. 
The next lemma is well known, but we include the proof for the convenience of readers.

\begin{lemma}\label{lem:same-content-everywhere}
Let $\delta \in (0, n]$, and $f, g: \Rn \to [0, \infty]$.
If $\Ha^\delta_\infty \big(\{x \in \Rn : f(x) = g(x)\} \big)=0$, then
$\int_\Rn f \,  d \Ha^{\delta}_\infty 
= \int_\Rn g \,  d \Ha^{\delta}_\infty$.
 
\end{lemma}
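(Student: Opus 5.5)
The hypothesis should be read as $\Ha^\delta_\infty\big(\{x \in \Rn : f(x) \neq g(x)\}\big)=0$, since that is what makes the statement meaningful; I take it as the intended assumption. Write $N:=\{x \in \Rn : f(x)\neq g(x)\}$, so that $\Ha^\delta_\infty(N)=0$. The plan is to show that the distribution functions $t\mapsto \Ha^\delta_\infty(\{f>t\})$ and $t\mapsto \Ha^\delta_\infty(\{g>t\})$ coincide for every $t>0$. The Choquet integrals, being one-dimensional Lebesgue integrals of these functions, then agree, whether finite or infinite.

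First I record the two properties of $\Ha^\delta_\infty$ that are needed, both immediate from the definition \eqref{HausdorffC}. Monotonicity holds because a cover of a larger set is also a cover of a smaller set. Countable (in particular finite) subadditivity holds because, given covers of $E_1$ and $E_2$ with sums within $\varepsilon$ of their contents, their union is a cover of $E_1\cup E_2$. No measurability of $f$ or $g$ is needed, since $\Ha^\delta_\infty$ is defined on all subsets of $\Rn$.

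Fix $t>0$. If $f(x)>t$, then either $x\in N$, or $f(x)=g(x)$ and hence $g(x)>t$. Thus $\{f>t\}\subset \{g>t\}\cup N$. Monotonicity and subadditivity give
\begin{equation*}
\Ha^\delta_\infty(\{f>t\}) \le \Ha^\delta_\infty(\{g>t\}) + \Ha^\delta_\infty(N) = \Ha^\delta_\infty(\{g>t\}).
\end{equation*}
Exchanging the roles of $f$ and $g$ gives the reverse inequality, so the two distribution functions are equal for every $t$. Integrating over $t\in(0,\infty)$ yields the claim.

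There is no real obstacle in this argument. The only point needing care is the reading of the hypothesis: it must concern the set where $f$ and $g$ differ. One should also note explicitly that subadditivity of the content, not additivity, is all that is used, so the non-linearity of the Choquet integral causes no difficulty.
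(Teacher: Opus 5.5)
Your proof is correct and is essentially the paper's argument: the paper also uses subadditivity to bound $\Ha^\delta_\infty(\{f>t\})$ by $\Ha^\delta_\infty(\{g>t\})+\Ha^\delta_\infty(\{f\neq g\})$, swaps $f$ and $g$, and concludes from the definition of the Choquet integral. Your reading of the hypothesis is also right, since the paper's own proof works with the set $\{f\neq g\}$, so the $\{f=g\}$ in the statement is a typo.
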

\begin{proof}
Since the Hausdorff content is subadditive, 
\[
\begin{split}
\Ha^{\delta}_\infty \big(\{f(x) >t\}\big) &\le \Ha^{\delta}_\infty\big(\{g(x) >t\} \cup \{f(x)\neq g(x)\}\big)\\
&\le \Ha^{\delta}_\infty\big(\{g(x) >t\}\big)  +  \Ha^{\delta}_\infty\big(\{f(x)\neq g(x)\}\big)\\ 
&= \Ha^{\delta}_\infty\big(\{g(x) >t\}\big).
\end{split}
\]
Also,  $\Ha^{\delta}_\infty\big(\{g(x) >t\}\big) \le \Ha^{\delta}_\infty\big(\{f(x) >t\}\big)$. Thus the claim follows by the definition of Choquet integral.
\end{proof}

The following lemma is due to D.\ Denneberg. 
Denneberg's result can be applied to Choquet integrals with respect to the dyadic Hausdorff content. 
Since the dyadic Hausdorff content is comparable with the ball covering  Hausdorff content $\Ha^\delta_\infty$  defined in  \eqref{HausdorffC}  by  \cite[Proposition~2.3]{YangYuan08}, we obtain the following lemma. We refer to the discussion in \cite[Theorem~3.9]{HH-S_Jia}.

\begin{lemma}\label{lem:Denneberg}
\cite[Theorem 6.3]{Den94}.
If  $\delta \in (0, n]$, then
for 
all sequences $(f_i)$ of non-negative functions $f_i:\Rn \to [0, \infty]$ 
the inequality
\begin{equation}\label{DennebergQuasiSublin}
\int_\Rn \sum_{i=1}^{\infty} f_i d \Ha^{\delta}_\infty \le c(n) \sum_{i=1}^{\infty} \int_\Rn f_i d \Ha^{\delta}_\infty\,,
\end{equation}
holds; here $c(n)$  is a constant which depends only on  the dimension $n$.
\end{lemma}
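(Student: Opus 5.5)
The lemma is cited from Denneberg, so the plan is to prove it first for the dyadic Hausdorff content and then transfer it to $\Ha^\delta_\infty$ by comparability. Let $\tilde{\Ha}^\delta_\infty$ be the dyadic content, defined like \eqref{HausdorffC} but with coverings by dyadic cubes $Q$ and weights $\ell(Q)^\delta$. By \cite[Proposition~2.3]{YangYuan08} there is $C(n)$ with
\[
C(n)^{-1}\tilde{\Ha}^\delta_\infty(E)\le \Ha^\delta_\infty(E)\le C(n)\tilde{\Ha}^\delta_\infty(E)
\]
for all $E$, and the constant can be taken uniform in $\delta\in(0,n]$. The Choquet integral depends monotonically on the capacity through its superlevel sets. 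Hence it suffices to prove \eqref{DennebergQuasiSublin} with constant $1$ for $\tilde{\Ha}^\delta_\infty$; the constant $c(n)=C(n)^2$ then appears when passing back to $\Ha^\delta_\infty$.

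The key step is that $\tilde{\Ha}^\delta_\infty$ is (or is equivalent with constant $1$ to) a \emph{strongly subadditive} capacity:
\[
\tilde{\Ha}^\delta_\infty(A\cup B)+\tilde{\Ha}^\delta_\infty(A\cap B)\le \tilde{\Ha}^\delta_\infty(A)+\tilde{\Ha}^\delta_\infty(B).
\]
Dyadic cubes are nested or disjoint, so any cover can be replaced by the maximal cubes it contains. Given near-optimal dyadic covers of $A$ and $B$, I will construct a cover of $A\cup B$ from the maximal cubes of the union, and a cover of $A\cap B$ from the cubes that lie below cubes of both families. Then I check that the cost of the first plus the cost of the second is at most the sum of the two original costs. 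This comparison works cube by cube, because each cube in the union of the families is counted at most twice. This is where I expect the real work to be: the combinatorial bookkeeping so that each cube is used at most twice with total weight preserved, together with the usual issue of whether the raw dyadic content or a suitably modified version is the one that is exactly strongly subadditive. If the raw one fails, I would use the modification adopted in the literature, such as \cite{YangYuan08} or \cite{HH-S_Jia}, which is still comparable to $\Ha^\delta_\infty$.

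Once strong subadditivity is available, Choquet's theorem (Denneberg, \cite{Den94}, Theorem 6.3) gives subadditivity of the Choquet integral, $\int (f+g)\le \int f+\int g$. The standard proof goes through simple functions, where the Choquet integral is a sum over a chain of level sets, and uses strong subadditivity to rearrange chains; monotone approximation then handles general nonnegative $f,g$. Induction gives the result for finite sums. For countable sums, let $S_N=\sum_{i\le N} f_i$, which increases to $S=\sum f_i$. Then $\{S>t\}=\bigcup_N\{S_N>t\}$ is an increasing union, and the dyadic content is continuous from below on increasing sequences of arbitrary sets; this is a standard property of dyadic/net contents. Monotone convergence for the Lebesgue integral in $t$ then yields
\[
\int S\,d\tilde{\Ha}^\delta_\infty=\lim_N\int S_N\,d\tilde{\Ha}^\delta_\infty\le\sum_i\int f_i\,d\tilde{\Ha}^\delta_\infty.
\]
Transferring back through the comparability gives \eqref{DennebergQuasiSublin}.
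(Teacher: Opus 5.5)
Your proposal is correct and follows essentially the paper's route. The paper simply invokes Denneberg's subadditivity theorem for the dyadic Hausdorff content and transfers the inequality to $\Ha^\delta_\infty$ through the comparability result of Yang and Yuan, with constants depending only on $n$. Your extra details only fill in what the paper leaves to the citations: the cube-by-cube proof of strong subadditivity, which already works for the raw half-open dyadic content so no modification is needed, and the passage from finite to countable sums via continuity from below of the dyadic content.
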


The next lemma allows 
the dimension $\delta$ in the $\delta$-dimension Hausdorff content to be changed under the integral.
This fact has been proven in  \cite[Proposition 2.3]{ChenOoiSpector2023}; we refer also to  \cite[Proposition 2.6]{HH-S_La}.   

\begin{lemma}\label{GeneralizationOV} \cite[Proposition 2.3]{ChenOoiSpector2023}.
 Let   $0 < \delta_1 < \delta_2 \le n$.
 Then the  inequality
\begin{equation*}
\biggl(\int_\Rn |f(x)|\, d \Ha_{\infty}^{\delta_2} \biggr)^{1/{\delta_2}}
\le 
\biggl(
 \frac{\delta_2}{\delta_1}\biggr)^{1/\delta_2} \biggl(\int_\Rn |f(x)|^{\frac{\delta_1}{\delta_2}} \, d \Ha^{\delta_1}_\infty \biggr)^{1/\delta_1}
\end{equation*}
holds for all functions $f: \Rn \to [-\infty, \infty]$.
\end{lemma}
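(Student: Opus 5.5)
The plan is to reduce the statement to a comparison of the two contents on individual sets, and then use the layer-cake definition of the Choquet integral together with the monotonicity of the distribution function. Write $q:=\delta_1/\delta_2\in(0,1)$ and $g:=|f|^{q}$. If $I:=\int_\Rn g\, d\Ha^{\delta_1}_\infty=\infty$ there is nothing to prove, so we assume $I<\infty$.

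\emph{Step 1 (set inequality).} For every $E\subset\Rn$ we show
\[
\Ha^{\delta_2}_\infty(E)\le \Ha^{\delta_1}_\infty(E)^{1/q}.
\]
Take any cover of $E$ by balls $B(x_i,r_i)$. Since $1/q=\delta_2/\delta_1\ge 1$, the embedding $\ell^1\subset\ell^{1/q}$ gives
\[
\sum_i r_i^{\delta_2}=\sum_i \bigl(r_i^{\delta_1}\bigr)^{1/q}\le \Bigl(\sum_i r_i^{\delta_1}\Bigr)^{1/q}.
\]
Taking the infimum over covers yields the claim.

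\emph{Step 2 (change of variables).} Let $\varphi(s):=\Ha^{\delta_1}_\infty(\{g>s\})$, which is nonincreasing. Since $\{|f|>t\}=\{g>t^q\}$, Step 1 and the substitution $s=t^q$, $dt=\frac1q s^{1/q-1}\,ds$, give
\[
\int_\Rn |f|\, d\Ha^{\delta_2}_\infty\le\int_0^\infty \varphi(t^q)^{1/q}\,dt=\frac1q\int_0^\infty \varphi(s)^{1/q}s^{1/q-1}\,ds .
\]

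\emph{Step 3 (key estimate).} Because $\varphi$ is nonincreasing,
\[
s\,\varphi(s)\le\int_0^s\varphi(\sigma)\,d\sigma\le I .
\]
Hence, as $1/q-1\ge 0$,
\[
\varphi(s)^{1/q}s^{1/q-1}=\varphi(s)\bigl(s\varphi(s)\bigr)^{1/q-1}\le \varphi(s)\, I^{1/q-1}.
\]
Integrating in $s$ and using $\int_0^\infty\varphi=I$ gives
\[
\int_\Rn |f|\,d\Ha^{\delta_2}_\infty\le \frac1q\, I^{1/q}=\frac{\delta_2}{\delta_1}\, I^{\delta_2/\delta_1}.
\]
Raising both sides to the power $1/\delta_2$ yields exactly the asserted inequality with constant $(\delta_2/\delta_1)^{1/\delta_2}$.

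The only genuinely non-routine point is Step 3: one has to recognise that the extra factor $s^{1/q-1}$ can be absorbed through the Chebyshev-type bound $s\varphi(s)\le I$, rather than through an interpolation argument. The rest is bookkeeping, including the measurability of $\varphi$, which holds because $\varphi$ is monotone.
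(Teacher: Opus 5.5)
Your proof is correct, and it is essentially the standard argument behind Chen--Ooi--Spector's Proposition~2.3, which the paper quotes without proof. Your Step~1 is the same set-wise comparison $\Ha^{\delta_2}_\infty(E)^{1/\delta_2}\le \Ha^{\delta_1}_\infty(E)^{1/\delta_1}$ that the paper itself derives by concavity in its final remark, and the layer-cake substitution combined with the Chebyshev-type bound $s\,\varphi(s)\le I$ yields exactly the stated constant $(\delta_2/\delta_1)^{1/\delta_2}$.
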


For  some other basic properties of Choquet integrals we refer to  \cite{Adams1998}, \cite[Chapter 4]{Adams2015}, and \cite{CerMS11}. 
The 
Lebesgue integral defined  in $\Rn$ is denoted by $\int_\Rn f \, dx$.
 We recall that by Cavalier's principle,
\[
\int_\Rn |f| \, dx = \int_0^\infty |\{x \in \Rn : |f(x)| >t \}| \, dt.
\]
Since the $n$-dimensional Hausdorff content is comparable with the  $n$-dimensional  Lebesgue measure, 
there exist  constants $c_1(n)>0$ and $c_2(n)>0$  such that
\begin{equation}\label{compare_int}
c_1(n)  \int_\Rn |f|  \, d \Ha^{ n}_\infty \le \int_\Rn |f| \, dx \le c_2(n)  \int_\Rn |f|  \, d \Ha^{ n}_\infty
\end{equation}
for all Lebesgue measurable  functions $f:\Rn \to [-\infty, \infty]$, and  we write $ \int_\Rn |f|  \, d \Ha^{ n}_\infty \approx \int_\Rn |f| \, dx$.

\section{The Poincar\'e-Sobolev inequality}\label{Limiting}

The Poincar\'e-Sobolev inequality  
is valid also for Lipschitz continuous functions with compact support.
It is known that the exponent $\frac{p(\delta- \kappa p)}{\delta -p}$ 
in  the inequality  \eqref{PS} is the best possible in this case.
 We refer to  the result  in \cite[Theorem 4.2(b)]{HH-S_JFA} and the discussion in \cite[Example 3.14]{HH-S_JFA}.

\begin{proposition}\label{thm:SP}
Let $n\geq 2$, $\delta \in (0, n]$, $p\in(\delta/n, \delta)$, and $\kappa \in [0, 1)$. There exists a constant $c=c(n, p, \delta , \kappa )>0$ such that
\begin{equation}\label{PS}
\Bigg( \int_\Rn |u|^{\frac{p(\delta- \kappa p)}{\delta -p}} \, d \Ha^{\delta- \kappa p}_\infty \Bigg)^{\frac{\delta -p}{p(\delta- \kappa p)}}
\le c \Bigg( \int_\Rn |\nabla u|^p \, d \Ha^\delta_\infty \Bigg)^{\frac1p}
\end{equation}
for every Lipschitz continuous function $u$ with  compact support in $\Rn$.
\end{proposition}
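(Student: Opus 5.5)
The plan is to reduce the Lipschitz case to the smooth case, which is \cite[Theorem 4.2(b)]{HH-S_JFA} recalled in \eqref{classical}. We mollify and pass to the limit on both sides.

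Let $u$ be Lipschitz with support in a ball $B(0,R)$. Set $u_\ve = u*\phi_\ve$ with a standard mollifier $\phi_\ve$. Then $u_\ve\in C_0^\infty(B(0,R+1))$ for $\ve<1$, and $u_\ve\to u$ uniformly on $\Rn$, since $u$ is uniformly continuous. Also $\nabla u_\ve = (\nabla u)*\phi_\ve$, where $\nabla u\in L^\infty$ exists a.e.\ by Rademacher's theorem, and $|\nabla u_\ve|\le L$ with $L$ the Lipschitz constant. Applying \eqref{classical} to $u_\ve$ gives
\[
\Bigg( \int_\Rn |u_\ve|^{q} \, d \Ha^{\delta- \kappa p}_\infty \Bigg)^{1/q}
\le c \Bigg( \int_\Rn |\nabla u_\ve|^p \, d \Ha^\delta_\infty \Bigg)^{\frac1p},
\qquad q=\tfrac{p(\delta- \kappa p)}{\delta -p}.
\]

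For the left-hand side: uniform convergence gives $\{|u|>t\}\subset\{|u_\ve|>t-\eta\}$ whenever $\|u-u_\ve\|_\infty<\eta$. Hence the distribution functions satisfy $\liminf_\ve \Ha^{\delta-\kappa p}_\infty(\{|u_\ve|^q>s\})\ge \Ha^{\delta-\kappa p}_\infty(\{|u|^q>s'\})$ for every $s'>s$. Fatou's lemma in $t$ for the one-dimensional integral defining the Choquet integral, together with monotonicity, then gives
\[
\int |u|^q\,d\Ha^{\delta-\kappa p}_\infty\le\liminf_{\ve\to0}\int |u_\ve|^q\,d\Ha^{\delta-\kappa p}_\infty .
\]
Alternatively, since all supports lie in one bounded set, we may write $|u|^q\le (|u_\ve|+\eta)^q\chi_{B(0,R+1)}$ and use quasi-subadditivity via Lemma \ref{lem:Denneberg}. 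This costs only a constant, which is harmless.

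The right-hand side is the main obstacle. A Choquet integral is not controlled by Lebesgue-a.e.\ information, and $\nabla u$ is only defined a.e. We need to show
\[
\limsup_{\ve\to0}\int |\nabla u_\ve|^p\,d\Ha^\delta_\infty\le C\int |\nabla u|^p\,d\Ha^\delta_\infty
\]
for a suitable representative of $\nabla u$. The natural choice is the pointwise upper limit $g(x)=\limsup_{r\to0}\bigl(\fint_{B(x,r)}|\nabla u|^p\,dy\bigr)^{1/p}$. We then use $|\nabla u_\ve|^p(x)\le \fint_{B(x,\ve)}|\nabla u|^p \lesssim \mathcal M(|\nabla u|^p)(x)$, and more precisely the bound by $\sup_{r\le\ve}\fint_{B(x,r)}|\nabla u|^p$. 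This decreases as $\ve\to0$ to $g^p$. Because the integrands are uniformly bounded by $L^p$ and supported in a fixed ball, and $\Ha^\delta_\infty$ is an outer capacity continuous along decreasing sequences of compact sets, the upper level sets of these decreasing upper semicontinuous-type majorants converge. Monotone convergence from above then holds for the Choquet integral. It remains to identify $\int g^p\,d\Ha^\delta_\infty$ with the integral of $|\nabla u|^p$; we interpret $\nabla u$ through this precise representative, which is the natural reading of the statement. If a clean statement with an arbitrary representative is desired, this is exactly where one must argue via Lemma \ref{lem:same-content-everywhere} and the choice of representative. The fallback is to invoke the boundedness of the maximal function on Choquet spaces for $p>\delta/n$, i.e.\ the Orobitg--Verdera type estimate $\int (\mathcal M f)^{p}\,d\Ha^\delta_\infty\le C\int |f|^p\,d\Ha^\delta_\infty$ for $p>\delta/n$. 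This is precisely why the hypothesis $p>\delta/n$ appears, and it gives the bound with constant $C$ directly. Combining the two limits yields \eqref{PS}.
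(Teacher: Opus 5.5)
Your proof works, but only through what you call the fallback; the first treatment of the gradient term should be dropped.

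\textbf{What fails in the first route.}
\begin{itemize}
\item The pointwise bound of $|\nabla u_\ve(x)|^p$ by the average of $|\nabla u|^p$ over $B(x,\ve)$ is Jensen's inequality. It fails for $p<1$, and $p<1$ is admissible here whenever $\delta<n$.
\item The majorants $\sup_{r\le\ve}$ of ball averages are lower semicontinuous, not upper. Their superlevel sets are therefore open. Continuity from above of $\Ha^\delta_\infty$ is only available along decreasing compact sets, and it fails for decreasing open sets: the strips $(0,1/k)\times(0,1)$ in $\Rk$ all have $\Ha^1_\infty\ge 1/2$ but empty intersection.
\item Reading $\nabla u$ as a precise representative changes the statement. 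For $\delta<n$, the right-hand side of \eqref{PS} does see the values of $\nabla u$ on the Lebesgue-null set where $u$ is not differentiable, and that set can have positive $\Ha^\delta_\infty$-content.
\end{itemize}

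\textbf{What actually proves the result.} None of the above is needed.
\begin{itemize}
\item For every $x$ you have $|\nabla u_\ve(x)|\le C(n)\,\mathcal M|\nabla u|(x)$. Use the maximal function of $|\nabla u|$ itself, not of $|\nabla u|^p$.
\item The Orobitg--Verdera theorem for $p>\delta/n$ then gives $\int_\Rn|\nabla u_\ve|^p\,d\Ha^\delta_\infty\le C\int_\Rn|\nabla u|^p\,d\Ha^\delta_\infty$ for every $\ve$. So no limit is taken on the right-hand side at all.
\item $\mathcal M|\nabla u|$ does not change under Lebesgue-null modifications, while the Orobitg--Verdera inequality holds for every function. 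Hence the bound holds for any choice of $\nabla u$ on the exceptional set.
\item Combined with your lower semicontinuity argument for the left-hand side, which is correct, this proves \eqref{PS}.
\end{itemize}

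\textbf{Comparison with the paper.} The paper does not approximate at all. It reruns the proof of the smooth case \cite[Theorem 4.2(b)]{HH-S_JFA}. It observes that the only property of $u$ used there is the pointwise estimate $|u(x)|\le c(n)\int_\Rn|\nabla u(y)|\,|x-y|^{1-n}\,dy$ for all $x$. This estimate holds for compactly supported Lipschitz functions by \cite[Lemma 7.14]{GT}.

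That is a one-line change with no limiting argument on either side, but it requires opening up the earlier proof. Your route uses the smooth inequality purely as a black box. The price is importing the Orobitg--Verdera maximal theorem and a semicontinuity argument. In exchange, your route makes explicit why the choice of representative of $\nabla u$ is harmless. In the paper's route this is built in, because the gradient enters only through a potential that does not see Lebesgue-null sets.
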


\begin{proof}
The proof  is similar to the proof of \cite[Theorem 4.2(b)]{HH-S_JFA}. The 
difference is that
the estimate
\[
|u(x)| \le c(n) \int_\Rn \frac{|\nabla u(y)|}{|x-y|^{n-1}} \, dy \quad\text{for all} \quad x \in \Rn
\]
does not hold only for $C^1_0(\Rn)$-functions but also for all Lipschitz continuous functions with  compact support in $\Rn$
by \cite[Lemma 7.14]{GT}
\end{proof}

The  general limiting case $p=\delta/n$ in Proposition~\ref{thm:SP} 
is open,  as far as we know,   except the special cases which we have mentioned in the introduction Section \ref{Intro}. We refer to  \cite[Section~3]{HH-S_La} also. 
We are interested in an inequality which corresponds to the inequality \eqref{PS}
when $p$ is allowed  to be $\delta /n$.
First we consider the case $\delta =n$, that is $p=1$, and obtain Theorem \ref{thm:new-SP-W}.
We recall an important result of D.\ R.\ Adams. 

\begin{theorem} \cite[Proposition~7, p.~122]{Adams1986}.\label{thm:Adams}
There exists a constant $c(n)>0$ such that
\[
\int_\Rn |u| \,  d \Ha^{n-1}_\infty \le c(n) \int_\Rn |\nabla u| \, dx.
\]
for all $u \in C^{\infty}_0(\Rn)$.
\end{theorem}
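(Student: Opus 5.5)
The natural route is the boxing inequality combined with the coarea formula; this is the classical strategy behind Adams's inequality. By homogeneity and the definition of the Choquet integral, it suffices to prove
\[
\int_0^\infty \Ha^{n-1}_\infty\big(\{x\in\Rn : |u(x)|>t\}\big)\,dt \le c(n)\int_\Rn |\nabla u|\,dx .
\]
Since $|u|$ is Lipschitz with compact support and $|\nabla |u|| = |\nabla u|$ almost everywhere, we may replace $u$ by $|u|$. By the coarea formula for Lipschitz functions,
\[
\int_\Rn |\nabla u|\,dx = \int_0^\infty \Ha^{n-1}\big(\{|u|=t\}\big)\,dt .
\]
It therefore suffices to show, for almost every $t>0$, the level-set estimate
\[
\Ha^{n-1}_\infty\big(\{|u|>t\}\big) \le c(n)\,\Ha^{n-1}\big(\{|u|=t\}\big).
\]

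To get this estimate, we use that for almost every $t$ the open set $E_t=\{|u|>t\}$ is bounded with finite perimeter, and its topological boundary is contained in $\{|u|=t\}$. This holds by Sard-type considerations; alternatively, we first mollify $u$ and apply Sard's theorem to obtain smooth level sets, then pass to the limit. The key tool is the Gustin boxing inequality: every compact set $K$ with smooth boundary (or, more generally, every bounded open set) can be covered by balls $B(x_i,r_i)$ with
\[
\sum_i r_i^{n-1}\le c(n)\,\Ha^{n-1}(\partial K).
\]
I would prove this by the standard argument. For each $x\in E_t$, the relative density $|B(x,r)\cap E_t|/|B(x,r)|$ equals $1$ for small $r$ and tends to $0$ as $r\to\infty$, since $E_t$ is bounded. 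By continuity we can choose $r_x$ with this density equal to $1/2$. The relative isoperimetric inequality in the ball then gives
\[
r_x^{n-1}\le c(n)\,\Ha^{n-1}\big(\partial E_t\cap B(x,r_x)\big).
\]
The Vitali $5r$-covering lemma yields disjoint balls $B(x_i,r_{x_i})$ whose enlargements $B(x_i,5r_{x_i})$ cover $E_t$. Summing over $i$ gives the level-set estimate with constant $5^{n-1}c(n)$. Integrating in $t$ then finishes the proof.

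The main obstacle is the step relating the boundary of the superlevel set to the level set, and controlling $\Ha^{n-1}(\partial E_t)$ by the coarea integrand for almost every $t$. For $u\in C_0^\infty(\Rn)$ this is clean: by Sard's theorem, almost every $t$ is a regular value, so $\partial E_t=\{|u|=t\}$ is a smooth compact hypersurface. The only care needed there is at $u=0$ when passing to $|u|$. To avoid that issue, I would work separately with $u^+$ and $u^-$, for which $\{u>t\}$, $t>0$, has smooth boundary $\{u=t\}$ at regular values, and apply Lemma~\ref{lem:Denneberg} (or simply subadditivity) to combine the two estimates.
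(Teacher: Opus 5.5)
Your proof is correct, and it takes a genuinely different route from the one the paper relies on. The paper gives no proof of this statement; it quotes Adams and remarks only that his proof relies on duality. In that approach one realizes $\int_\Rn |u|\,d\Ha^{n-1}_\infty$, up to constants, as a supremum of $\int_\Rn |u|\,d\mu$ over measures with $\mu(B(x,r))\le r^{n-1}$, and then bounds each such pairing by $c(n)\int_\Rn|\nabla u|\,dx$. You bypass duality altogether. You prove the stronger level-set (isoperimetric) estimate $\Ha^{n-1}_\infty(\{|u|>t\})\le c(n)\,\Ha^{n-1}(\{|u|=t\})$ for a.e.\ $t>0$ by Gustin's boxing argument (density-$\frac12$ radii, the relative isoperimetric inequality in balls, Vitali), and then integrate in $t$ with the coarea formula. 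As far as I recall, the trace inequality for such $\mu$ used on the duality side is itself classically obtained from essentially this boxing and coarea argument, so your route is the more elementary and self-contained one.

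Your route also yields a pointwise-in-$t$ statement. Combine it with $\Ha^{n-\kappa}_\infty(E)^{\frac{n-1}{n-\kappa}}\le \Ha^{n-1}_\infty(E)$, which is the concavity comparison in the paper's last remark with $\delta_1=n-1$, $\delta_2=n-\kappa$. Integrating over $t\in(a,b)$ then gives Lemma~\ref{lem:KO3.5} directly for smooth $u$, without the truncation $\psi$. The duality route, for its part, places the inequality inside the theory of Choquet spaces and their Morrey-type duals, and that framework is what extends to fractional settings.

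A few clean-ups are worth making:
\begin{itemize}
\item The inclusion $\partial E_t\subset\{|u|=t\}$ holds for every $t$ by continuity alone. Sard's theorem is needed only to make $\partial E_t$ a smooth hypersurface, so that the perimeter of $E_t$ in $B(x,r_x)$ equals $\Ha^{n-1}(\partial E_t\cap B(x,r_x))$ up to normalization.
\item The mollification detour is superfluous, since $u$ is already smooth.
\item For the Vitali lemma you should state explicitly that the radii are uniformly bounded. This follows from $\frac12|B(x,r_x)|=|B(x,r_x)\cap E_t|\le|E_t|$.
\item The splitting into $u^\pm$ costs nothing: $\{|u|>t\}=\{u^+>t\}\cup\{u^->t\}$, so subadditivity of $\Ha^{n-1}_\infty$ applies with constant one. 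For $t>0$ it is not even needed, because $|u|=\pm u$ near $\{|u|=t\}$.
\end{itemize}
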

 Adams's proof relies on the duality. It seems that such a proof does not  directly cover the case $p=1$ 
in an  inequality corresponding to \eqref{PS}, that is
when on  the left-hand side dimensions  
of the Hausdorff content are allowed to
be $n-\kappa$, where $\kappa \in (0, 1)$.

We wish to formulate results for more general functions, namely Sobolev functions and hence we recall some of their properties.
We write
$\|u\|_1 : =\int_{\Rn}\vert u(x)\vert\,dx$\,.
 We denote by $W^{1,1}_0(\Rn)$ the closure of $C^\infty_0(\Rn)$
with respect to the norm $u \mapsto \|u\|_1 + \|\nabla u\|_1$. 
This space is called Sobolev space.
The elements of $W^{1,1}_0(\Rn)$ are equivalence classes. 
We  identify functions whenever they agree almost everywhere  with respect to the $n$-dimensional Lebesgue measure,
 as it  is usually done.
Hence the term Sobolev functions in the space $W^{1,1}_0(\Rn)$ is justified.
In the next proof we need  the Sobolev $1$-capacity:
\begin{equation}\label{Sobolevcapacity}
\operatorname{Cap}_1 (E) := \inf \int_\Rn |\nabla u| \, dx,
\end{equation}
where the infimum is taken over all functions $u \in W^{1,1 }_0(\Rn)$ which are at least one  in  some open subset in  the set $E$. For the properties of Sobolev $1$-capacity we refer to \cite[Section 4.7]{EvaG92}. 
 Using the Sobolev $1$-capacity we define $1$-quasicontinuity, that is quasicontinuity with respect to the Sobolev $1$-capacity \eqref{Sobolevcapacity}.

We say that  a function  $u: \Rn \to [-\infty, \infty]$ is $1$-quasicontinuous, if for every $\ve>0$, there exists an open set $O$
 in $\Rn$ such that $\operatorname{Cap}_1 (O)\le \ve$ and 
 $u$ restricted to $\Rn \setminus O$ is continuous. Every $u \in W^{1,1}_0(\Rn)$ has a $1$-quasicontinuous representative, \cite[Theorem 4.19]{EvaG92}.
The $1$-quasicontinuous representative is unique in the sense that if $u^*_1$ and $u^*_2$ are two 
$1$-quasicontinuous representatives of  $u \in W^{1,1}_0(\Rn)$, then $u_1^*=u_2^*$ outside a set of zero Sobolev $1$-capacity.

We are ready to state one of our main results.

\begin{theorem}\label{thm:new-SP-W}
Let  $n\geq 2$  and 
$\kappa \in [0, 1]$. Then there exists a constant $c=c(n)$ such that
\begin{equation}\label{SPW}
\bigg(\int_\Rn |u|^{\frac{n-\kappa}{n-1}} \,  d \Ha^{n- \kappa}_\infty \bigg)^{\frac{n-1}{n-\kappa}}\le c \int_\Rn |\nabla u| \, dx
\end{equation}
for  all $1$-quasicontinuous $u \in W^{1, 1}_0(\Rn)$.
\end{theorem}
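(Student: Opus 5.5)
Our plan is to interpolate between the two endpoint cases $\kappa=0$ and $\kappa=1$ by means of Lemma~\ref{GeneralizationOV}, first for smooth functions and then for quasicontinuous Sobolev functions by approximation.

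\emph{Endpoints.} For $u\in C^\infty_0(\Rn)$ and $\kappa=1$, the inequality \eqref{SPW} is exactly Adams's inequality, Theorem~\ref{thm:Adams}. For $\kappa=0$ it reads
\[
\Big(\int_\Rn |u|^{\frac n{n-1}}\,d\Ha^n_\infty\Big)^{\frac{n-1}n}\le c\int_\Rn|\nabla u|\,dx .
\]
This is the classical Gagliardo--Nirenberg--Sobolev inequality combined with \eqref{compare_int}.

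\emph{Intermediate $\kappa\in(0,1)$.} We apply Lemma~\ref{GeneralizationOV} with $\delta_1=n-1$, $\delta_2=n-\kappa$ and $f=|u|^{\frac{n-\kappa}{n-1}}$. Then $f^{\delta_1/\delta_2}=|u|$, and the lemma gives
\[
\Big(\int_\Rn |u|^{\frac{n-\kappa}{n-1}}\,d\Ha^{n-\kappa}_\infty\Big)^{\frac1{n-\kappa}}\le \Big(\tfrac{n-\kappa}{n-1}\Big)^{\frac1{n-\kappa}}\Big(\int_\Rn|u|\,d\Ha^{n-1}_\infty\Big)^{\frac1{n-1}} .
\]
Raising both sides to the power $n-1$ and using Theorem~\ref{thm:Adams} yields \eqref{SPW} with a constant independent of $\kappa$, bounded by $2\,c(n)$. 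So for smooth functions one only needs Adams's inequality, which also covers $\kappa=0$.

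\emph{Extension to $1$-quasicontinuous $u\in W^{1,1}_0(\Rn)$.} This is the main obstacle. Take $u_j\in C^\infty_0(\Rn)$ with $u_j\to u$ in $W^{1,1}$. By the standard capacity argument (as in \cite[Theorem 4.19]{EvaG92}), one can pass to a subsequence with $\|\nabla(u_{j+1}-u_j)\|_1\le 4^{-j}$ and define
\[
E_j=\{|u_{j+1}-u_j|>2^{-j}\}, \qquad \operatorname{Cap}_1(E_j)\le 2^{j}\cdot 4^{-j}.
\]
Then $u_j$ converges uniformly outside open sets of arbitrarily small $\operatorname{Cap}_1$ to a quasicontinuous representative, which agrees with $u$ outside a set of zero $1$-capacity. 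Since $\Ha^{n-1}_\infty(E)\le c\operatorname{Cap}_1(E)$ (by the coarea/boxing inequality, or directly from Theorem~\ref{thm:Adams} applied to test functions), that set has $\Ha^{n-1}_\infty$ measure zero, hence $\Ha^{n-\kappa}_\infty$ measure zero as well, because the Hausdorff contents share null sets with Hausdorff measures. By Lemma~\ref{lem:same-content-everywhere} we may therefore work with $\lim u_j$ pointwise quasi-everywhere.

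Alternatively, and more simply, we prove $\int|u|\,d\Ha^{n-1}_\infty\le c\|\nabla u\|_1$ for quasicontinuous $u$ and then reuse the interpolation step above. To do this, write $|u|\le |u_1|+\sum_j|u_{j+1}-u_j|$ quasi-everywhere. Lemma~\ref{lem:Denneberg} together with Adams's inequality then gives
\[
\int|u|\,d\Ha^{n-1}_\infty\le c\Big(\|\nabla u_1\|_1+\sum_j\|\nabla(u_{j+1}-u_j)\|_1\Big).
\]
Choosing the subsequence so that $\|\nabla u_1\|_1\le\|\nabla u\|_1+\ve$ and the tail sum is at most $\ve$, and letting $\ve\to0$, gives the claim. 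This completes the plan.
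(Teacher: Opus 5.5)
Your proof is correct. The smooth case is the same as the paper's (Lemma~\ref{GeneralizationOV} with $\delta_1=n-1$, $\delta_2=n-\kappa$, then Theorem~\ref{thm:Adams}), but your passage to quasicontinuous functions takes a genuinely different route.

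The paper keeps $\kappa$ arbitrary throughout. It takes any subsequence of smooth approximants converging to $u$ quasi-everywhere and observes that the exceptional set is $\Ha^{n-\kappa}_\infty$-null. It then passes to the limit in the smooth inequality using the Fatou lemma \eqref{equ:Fatou} for Choquet integrals, which is an external result (proved for dyadic content and transferred by comparability).

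You instead extend only the endpoint $\kappa=1$. You combine the telescoping bound $|u|\le|u_1|+\sum_j|u_{j+1}-u_j|$ q.e., Denneberg's countable quasi-subadditivity (Lemma~\ref{lem:Denneberg}, which the paper states but never uses), and Adams's inequality on each smooth summand. Every $\kappa\in[0,1)$ then follows from Lemma~\ref{GeneralizationOV}, which holds for arbitrary functions and needs no further approximation.

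Your route has three advantages:
\begin{itemize}
\item it avoids the Fatou lemma;
\item it needs the null-set transfer only for $\Ha^{n-1}_\infty$;
\item it makes explicit that the $\kappa=1$ case for quasicontinuous $u$ implies all the others. This is also what the proof of Theorem~\ref{thm:limit_case} really relies on when it applies Theorem~\ref{thm:Adams} to a quasicontinuous $u\in W^{1,1}_0(\Rn)$.
\end{itemize}
The price is that, for each $\ve$, you must choose a rapidly converging subsequence with $\|\nabla u_1\|_1\le\|\nabla u\|_1+\ve$ and tail at most $\ve$. You then have to invoke uniqueness of $1$-quasicontinuous representatives to identify each such q.e.\ limit with the given $u$. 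The paper needs only one q.e.\ convergent subsequence and a single $\liminf$.

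Your first sketch of the extension, taken alone, stops before any limit is taken in the inequality; it is the telescoping argument that completes the proof.
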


\begin{proof}
Let $\kappa \in [0, 1]$.
For  functions  $u \in C^\infty_0(\Omega)$ 
Lemma~\ref{GeneralizationOV}, Theorem~\ref{thm:Adams}, and  the  inequalities \eqref{compare_int} imply that
\begin{equation}\label{equ:Admas-corollary}
\begin{split}
\Big(\int_\Rn |u|^{\frac{n- \kappa}{n-1}} \,  d \Ha^{n-\kappa}_\infty \Big)^{\frac{n-1}{n- \kappa}}
&\le \bigg(\frac{n-\kappa}{n-1}\bigg)^{\frac{n-1}{n-\kappa}} \int_\Rn |u| \,  d \Ha^{n-1}_\infty \\
&\le  \bigg(\frac{n}{n-1}\bigg)  \int_\Rn |u| \,  d \Ha^{n-1}_\infty \le 
c(n) \int_\Rn |\nabla u| \, dx\,.
\end{split}
\end{equation}

If $u \in W^{1, 1}_0(\Rn)$,
then there exists a sequence 
of functions $\phi_i$ 
from  $C^\infty_0(\Rn)$ converging to  the function $u$ 
in the Sobolev norm $u \mapsto \|u\|_1 + \|\nabla u\|_1$.   There exists a subsequence, 
denoted by $(\phi_k)$, such that 
$(\phi_k)$ converges pointwise to a representative $u^*$ 
outside a set of zero Sobolev $1$-capacity,
that is  $1$-quasieverywhere,
moreover
$u^*$ is  the   $1$-quasicontinuous representative of $u$, we refer to \cite[Theorem 4.19 and its proof]{EvaG92}.

By \cite[Theorem 5.12, p.~220]{EvaG92} a set $E$ in $\Rn$ has a zero Sobolev $1$-capacity if and only if the set 
$E$ has a zero $(n-1)$-dimension Hausdorff measure, and thus also  the Hausdorff content $\Ha^{n-\kappa}_\infty (E) =0$ for all $\kappa \in[0, 1]$. 
We conclude that the sequence
$(\phi_k)$ converges pointwise to $u^*$ outside a set of zero $\Ha^{n-\kappa}_\infty$-content.

In \cite[Proposition~4.2]{HH-S_JGA} the authors proved Fatou's lemma for Choquet integral with respect to the dyadic Hausdorff content. Since the dyadic Hausdorff content is comparable with the ball covering  Hausdorff content $\Ha^\delta_\infty$  by  \cite[Proposition~2.3]{YangYuan08}, we obtain for every  fixed $\delta \in(0, n]$  
and  for any sequence of non-negative functions $f_i$ the inequality
\begin{equation}\label{equ:Fatou}
\int_\Rn \liminf_{i \to \infty} f_i \, d \Ha^{\delta}_\infty \le c(n) 
\liminf_{i \to \infty} \int_\Rn  f_i \, d \Ha^{\delta}_\infty.
\end{equation}
 Hence, Lemma~\ref{lem:same-content-everywhere}, the inequality \eqref{equ:Fatou} for the sequence  $(|\phi_k|)$, and
 \eqref{equ:Admas-corollary} for the function $\phi_k$ imply that
\begin{equation*}
\begin{split}
\bigg(\int_\Rn |u^*|^{\frac{n-\kappa}{n-1}} \,  d \Ha^{n- \kappa}_\infty \bigg)^{\frac{n-1}{n-\kappa}}
&= \bigg(\int_\Rn \liminf_{k \to \infty}|\phi_k|^{\frac{n-\kappa}{n-1}} \,  d \Ha^{n- \kappa}_\infty \bigg)^{\frac{n-1}{n-\kappa}}\\
&\le \bigg( c \liminf_{k \to \infty} \int_\Rn |\phi_k|^{\frac{n-\kappa}{n-1}} \,  d \Ha^{n- \kappa}_\infty \bigg)^{\frac{n-1}{n-\kappa}}\\
&\le c \liminf_{k \to \infty} \int_\Rn |\nabla \phi_k| \, dx =  \int_\Rn |\nabla u^*| \, dx. \qedhere
\end{split}
\end{equation*}
\end{proof}

We point out  that smooth functions are not necessarily dense in the 
Lebesgue type space defined by Choquet integral, we refer to \cite[Remark~6.3]{HH-S_JGA}. 
Hence, in the previous proof when
the approximating sequence comes from the definition of $W^{1, 1}_0(\Rn)$, additional work is needed to show that 
the sequence converges also  in the sense of Choquet  with respect to the Hausdorff content.

Next example shows that the exponent $\frac{n- \kappa}{n-1}$ in the inequality \eqref{SPW} is the best possible
under the circumstances stated in    Theorem~\ref{thm:new-SP-W} .         

\begin{example}\label{first_example}
 Let us choose $0<r<1$. Let $u_r: \Rk \to [0, \infty)$ be a function  
defined as
$u_r(x) =\frac1r$ in $B(0,r)$, $u_r(x)=0$ in $\Rk \setminus B(0, 2r)$, and linear in $B(0, 2r) \setminus B(0, r)$. Thus, $|\nabla u_r| \approx \frac1{r^2}$ in  $B(0, 2r) \setminus B(0, r)$, and 
$\int_{\R^2} |\nabla u_r| \, dx \approx 1$. On the other hand,
\[
\Big( \int_{\R^2} |u_r|^\alpha \, \Ha^{2-\kappa}_\infty \Big)^{\frac1\alpha} \ge \frac1r \, r^{\frac1\alpha (2-\kappa)}
= r^{\frac1\alpha(2-\kappa) -1} \to \infty
\]
as $r \to 0^+$ provided that $\frac1\alpha(2-\kappa) -1 <0$ i.e.\ if $\alpha > 2- \kappa$. Hence, 
 the requirement is
$\alpha \le 2- \kappa = \frac{2-\kappa}{2-1}$.
\end{example}

We record some surprising consequences of the previous theorem.

\begin{remark}\label{connections}
(a) 
Choosing $\kappa =0$ in Theorem~\ref{thm:new-SP-W} and using the inequalities \eqref{compare_int}
the classical Poincar\'e-Sobolev inequality 
\begin{equation}\label{Classical}
\Big(\int_\Rn |u|^{\frac{n}{n-1}} \,  dx \Big)^{\frac{n-1}{n}}\le c \int_\Rn |\nabla u| \, dx
\end{equation}
 is recovered
 for all functions  $u \in W^{1, 1}_0(\Rn)$.
This well-known inequality \eqref{Classical} was
established by  E.\ Gagliardo \cite{G} and
L.\ Nirenberg  \cite{N}.

(b) As far as we know, the case  $\kappa \in (0, 1)$ in Theorem~\ref{thm:new-SP-W} is new.
  Also, the inequality \eqref{SPW}  seems to
 be sharper  than the  inequality \eqref{Classical}. Namely, Lemma~\ref{GeneralizationOV} implies that
\[
\Big(\int_\Rn |u|^{\frac{n}{n-1}} \,  d x\Big)^{\frac{n-1}{n}}
\le 
c(n) \bigg(\int_\Rn |u|^{\frac{n-\kappa}{n-1}} \,  d \Ha^{n- \kappa}_\infty \bigg)^{\frac{n-1}{n-\kappa}}.
\]
\end{remark}

Now we prove the limiting case 
$p=\frac{\delta}{n}$ whenever
the dimension $\delta$ is close enough to $n$.
The proof uses  Adams's result  Theorem \ref{thm:Adams}

\begin{theorem}\label{thm:limit_case}
Let $n\geq 2$,  $\kappa \in [0, 1]$, and $\frac{n-1}{1-\frac{\kappa }{n}} \le \delta \le n$.
Then there exists a constant $c=c(n, \kappa, \delta)$ such that
\[
\bigg ( \int_\Rn |u|^{\frac{\delta- \kappa \frac{\delta}{n}}{n-1}} \, d \Ha^{\delta- \kappa \frac{\delta}{n}}_\infty \bigg)^{\frac{n-1}{\delta- \kappa \frac{\delta}{n}}} \le
c \bigg( \int_\Rn |\nabla u|^{\frac{\delta}{n}} d \Ha^{\delta}_\infty \bigg)^{\frac{n}{\delta}}
\]
for  all $1$- quasicontinuous  $u \in W^{1, 1}_0(\Rn)$.
\end{theorem}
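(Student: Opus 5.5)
We reduce to Theorem~\ref{thm:new-SP-W} by applying Lemma~\ref{GeneralizationOV} on both sides of the inequality, choosing the parameters so that the dimension condition $\frac{n-1}{1-\kappa/n}\le\delta$ makes the exponents match.

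Write $\gamma:=\delta-\kappa\frac{\delta}{n}=\delta(1-\frac{\kappa}{n})$, so the hypothesis reads $n-1\le\gamma\le n-\kappa$ (the upper bound since $\delta\le n$). Define $\kappa':=n-\gamma\in[\kappa,1]$, so that $\gamma=n-\kappa'$. Theorem~\ref{thm:new-SP-W} with $\kappa'$ in place of $\kappa$ then gives, for every $1$-quasicontinuous $u\in W^{1,1}_0(\Rn)$,
\[
\bigg(\int_\Rn |u|^{\frac{\gamma}{n-1}}\, d\Ha^{\gamma}_\infty\bigg)^{\frac{n-1}{\gamma}}\le c(n)\int_\Rn|\nabla u|\,dx .
\]
The left-hand side is exactly the left-hand side of the claim. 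The admissibility $\kappa'\in[0,1]$ is precisely where the lower bound on $\delta$ enters: $\gamma\ge n-1$ is equivalent to $\delta\ge\frac{n-1}{1-\kappa/n}$.

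For the right-hand side, we combine \eqref{compare_int} with Lemma~\ref{GeneralizationOV}, applied with $\delta_1=\delta$, $\delta_2=n$ and $f=|\nabla u|$:
\[
\int_\Rn|\nabla u|\,dx\le c_2(n)\int_\Rn|\nabla u|\,d\Ha^n_\infty\le c_2(n)\,\frac{n}{\delta}\bigg(\int_\Rn|\nabla u|^{\frac{\delta}{n}}\,d\Ha^{\delta}_\infty\bigg)^{\frac{n}{\delta}} .
\]
If $\delta=n$ this step is trivial. Chaining the two displays proves the theorem with $c=c(n,\delta)$.

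The step needing most care is this last application of Lemma~\ref{GeneralizationOV}. It is stated for arbitrary functions, so applying it to $|\nabla u|$ for Sobolev $u$ is fine, and no density argument is needed here because Theorem~\ref{thm:new-SP-W} already handles quasicontinuous Sobolev functions. The only other point to check is the exponent bookkeeping: $\gamma\le n$ and $\kappa'\ge0$, both of which follow from $\delta\le n$ and $\kappa\ge 0$.
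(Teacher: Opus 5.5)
Your proof is correct and is essentially the paper's argument. The paper chains Lemma~\ref{GeneralizationOV} (from $\Ha^{\delta-\kappa\delta/n}_\infty$ down to $\Ha^{n-1}_\infty$), Theorem~\ref{thm:Adams}, \eqref{compare_int}, and Lemma~\ref{GeneralizationOV} again; your appeal to Theorem~\ref{thm:new-SP-W} with $\kappa'=n-\gamma$ just packages the first two steps, and it is slightly cleaner because that theorem already contains the approximation argument needed to apply Adams's inequality (stated only for $C^\infty_0(\Rn)$) to a $1$-quasicontinuous $u\in W^{1,1}_0(\Rn)$, a step the paper's proof leaves implicit.
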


\begin{proof}
 Let  $u \in W^{1, 1}_0(\Rn)$  be $1$-quasicontinuous.
 We note that $\delta- \kappa \frac{\delta}{n} \ge n-1$, since $\frac{n-1}{1-\frac{\kappa }{n}} \le \delta $.
By  Lemma~\ref{GeneralizationOV}, 
Theorem \ref{thm:Adams}, the inequalities \eqref{compare_int}, and   using once more 
Lemma~\ref{GeneralizationOV}, 
we obtain
\[
\begin{split}
\bigg ( \int_\Rn |u|^{\frac{\delta- \kappa \frac{\delta}{n}}{n-1}} \, d \Ha^{\delta- \kappa \frac{\delta}{n}}_\infty \bigg)^{\frac{n-1}{\delta- \kappa \frac{\delta}{n}}}
&\le \bigg(\frac{\delta-\kappa\frac{\delta}{n}}{n-1}\bigg)^{\frac{n-1}{\delta -\kappa\frac{\delta}{n}}} \int_\Rn |u| \, d \Ha^{n-1}_\infty \\
&\le c(n, \kappa,\delta ) \int_\Rn |\nabla u| d \Ha^{n}_\infty\\
&\le c(n, \kappa , \delta )\bigg( \int_\Rn |\nabla u|^{\frac{\delta}{n}} d \Ha^{\delta}_\infty \bigg)^{\frac{n}{\delta}}\,.
\qedhere
\end{split}
\]
\end{proof}

 Note that 
if
$\int_\Rn |\nabla u|^{\frac{\delta}{n}} d \Ha^{\delta}_\infty < \infty$, then
Lemma \ref{GeneralizationOV} 
implies  that  the function $|\nabla u|$ is integrable over $\Rn$.
Hence, the assumption 
$u \in W^{1, 1}_0(\Rn)$ in Theorem~\ref{thm:limit_case} 
is reasonable.

Next we give an example, which shows that for any fixed $\delta \in (0, n-1)$ and $p>0$ 
the inequality
\begin{equation}\label{W}
\bigg(\int_\Rn |u|^{p} \,  d \Ha^{\delta}_\infty \bigg)^{\frac{1}{p}}\le c \int_\Rn |\nabla u| \, dx
\end{equation}
 cannot hold
for all $u \in C^\infty_0(\Rn)$. Thus  the range $[0, 1]$ for $\kappa$ in Theorem~\ref{thm:new-SP-W} is 
the largest possible  range.

\begin{example}\label{second_example}
Let $\delta \in (0, n-1)$.
A Cantor type construction can give a
  compact set  $E$ in  $\Rn$ with the Hausdorff dimension $n-1$ and zero $(n-1)$-dimensional Hausdorff measure, we refer to 
  \cite[Examples 4.11 and 4.12]{Mattila}. Then the $\delta$-dimensional Hausdorff measure of  this set $E$ is infinity, and  
  hence $\Ha^\delta_\infty (E) >0$. Since the $(n-1)$-dimensional Hausdorff measure is zero, the Sobolev $1$-capacity of $E$ 
  is zero \cite[Theorem 5.12, p.~220]{EvaG92}.  Since the  set $E$ is compact, there exists a sequence $(\phi_i)$ 
  of functions
  from $C^\infty_0(\Rn)$ such that $\phi_i \ge 1$ in the set $E$ and 
\[
\int_\Rn |\nabla \phi_i| \, dx \to 0
\]
as $i \to \infty$ by \cite[Remark after Definition 4.10, p.~171]{EvaG92}. On the other hand, since $\phi_i \ge 1$ in the set $E$ 
we have for all $p>0$ that
\[
\int_\Rn |\phi_i|^{p} \,  d \Ha^{\delta}_\infty \ge \Ha^\delta_\infty (E) >0.
\] 
Hence, the inequality \eqref{W} does not hold.
\end{example}

\begin{remark}
Corollary \ref{improvement} follows from Theorem \ref{thm:limit_case} immediately, since  $C^{\infty}_0(\Rn) \subset W^{1,1}_0(\Rn)$.   
\end{remark}

\section{The superlevel Sobolev inequality}\label{SuperlevelSection}

In this chapter
 we study the superlevel Sobolev inequality  introduced  by Kangasniemi and Onninen in \cite[Proposition~1.7]{KO}.
The following lemma with 
$\kappa =0$
recovers \cite[Lemma~3.5]{KO}. 
From now on 
the essential supremum of 
 a function $u$  is written as $\|u\|_{\infty}=
\operatorname{ess\,sup}_{x\in\Rn} |u(x)|$.

\begin{lemma}\label{lem:KO3.5}
Let   $\kappa \in [0, 1]$.  There exists a constant $c>0$ such that
for all non-negative, continuous  functions $u \in W^{1,1}_0(\Rn)$, 
and all $0\le a <b < \|u\|_{\infty}$, 
the inequality
\[
|b-a| \Big(\Ha^{n- \kappa }_\infty(\{x \in \Rn :  u(x) \ge b\}) \Big)^{\frac{n -1}{n- \kappa }}
\le c \int_{\{x \in \Rn : a<u(x)<b\}} |\nabla u| \, dx
\]
is valid.
\end{lemma}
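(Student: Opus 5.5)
The plan is to apply Theorem~\ref{thm:new-SP-W} to a truncated version of $u$ that sees only the levels between $a$ and $b$. Define
\[
v(x) := \min\{ \max\{u(x)-a, 0\},\, b-a\},
\]
so that $v=0$ where $u\le a$, $v=b-a$ where $u\ge b$, and $v=u-a$ in between. Since $u$ is continuous, $v$ is continuous, hence $1$-quasicontinuous, with no exceptional set at all. Truncation by Lipschitz functions vanishing at $0$ preserves $W^{1,1}_0(\Rn)$. Moreover, by the chain rule for Sobolev functions, $\nabla v = \nabla u\,\chi_{\{a<u<b\}}$ almost everywhere. Here one uses that $\nabla u=0$ a.e.\ on the level sets $\{u=a\}$ and $\{u=b\}$.

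Applying \eqref{SPW} to $v$ gives
\[
\bigg(\int_\Rn v^{\frac{n-\kappa}{n-1}}\, d\Ha^{n-\kappa}_\infty\bigg)^{\frac{n-1}{n-\kappa}} \le c(n)\int_{\{a<u<b\}} |\nabla u|\,dx .
\]
To bound the left-hand side from below, note that $v\ge b-a$ on the set $E_b:=\{u\ge b\}$. Directly from the definition of the Choquet integral,
\[
\int_\Rn v^{q}\, d\Ha^{n-\kappa}_\infty \ge \int_0^{(b-a)^q} \Ha^{n-\kappa}_\infty(E_b)\,dt = (b-a)^q\,\Ha^{n-\kappa}_\infty(E_b),
\]
with $q=\frac{n-\kappa}{n-1}$. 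For every $t<(b-a)^q$ we have $\{v^q>t\}\supset E_b$, which justifies this step. Raising to the power $\frac{n-1}{n-\kappa}=1/q$ yields exactly $(b-a)\,\Ha^{n-\kappa}_\infty(E_b)^{\frac{n-1}{n-\kappa}}$. This gives the claim with $c=c(n)$, independent of $\kappa$.

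The hypothesis $b<\|u\|_\infty$ is only used to make the statement non-vacuous, since it guarantees $E_b$ is nonempty. The case $\kappa$ near or equal to $1$ is covered because Theorem~\ref{thm:new-SP-W} allows $\kappa\in[0,1]$.

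The main technical point is the Sobolev calculus for $v$: that $v\in W^{1,1}_0(\Rn)$ and that its gradient vanishes a.e.\ outside $\{a<u<b\}$. I would cite the standard truncation results (e.g.\ \cite{EvaG92}). Alternatively, $v$ can be approximated by smooth compositions $\psi_\ve\circ u$, where $\psi_\ve$ is a smooth approximation of the piecewise linear truncation, passing to the limit with the monotone convergence of $|\nabla(\psi_\ve\circ u)|$. Continuity of $u$ makes the identification of the quasicontinuous representative trivial, so no capacity argument is needed beyond that.
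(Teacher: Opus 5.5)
Your proposal is correct and follows the paper's proof essentially verbatim: the same truncation (the paper's $\psi$ is your $v$), Theorem~\ref{thm:new-SP-W} applied to it, and the lower bound obtained because the truncation equals $b-a$ on the superlevel set $\{u\ge b\}$. You also note explicitly that $v$ is continuous and hence $1$-quasicontinuous, which Theorem~\ref{thm:new-SP-W} requires; the paper leaves this point implicit.
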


\begin{proof}
Let us 
consider a function
$\psi: \Rn \to \R$ which we define as
\[
\psi(x) :=
\begin{cases}
0, &\text{ if } u(x) \le a;\\
u(x) -a, &\text{ if } a< u(x) < b;\\
b- a, &\text{ if } u(x) \ge b.\\
\end{cases}
\]
Thus,  $\psi \in W^{1,1}_0(\Rn)$, since $u \in W^{1, 1}_{0}(\Rn)$.
Moreover $|\psi(x)| \le |u(x)|$ in $\Rn$,  $|\nabla \psi| = |\nabla u|$ a.e.\ in $\{ a <u(x) <b\}$, and
$|\nabla \psi| =0$ a.e.\ in $\{ u(x) \le a \text{ or } u(x) \ge b\}$.
We  apply Theorem~\ref{thm:new-SP-W}  to  the function $\psi$ and obtain
\[
\begin{split}
&|b-a| \Big(\Ha^{n- \kappa }_\infty(\{ u(x) \ge b\}) \Big)^{\frac{n -1}{n- \kappa }}
= \Big( \int_{\{ u(x) \ge b\}} |b-a|^{\frac{n- \kappa }{n -1}} \, d\Ha^{n-\kappa }_\infty \Big)^{\frac{n -1}{n-\kappa }}\\
&\quad \le \Big( \int_{\Rn} |\psi|^{\frac{n- \kappa}{n -1}} \, d\Ha^{n-\kappa }_\infty \Big)^{\frac{n -1}{n-\kappa }}
\le c \int_{\Rn} |\nabla \psi| \, dx
=c \int_{\{a<u(x)<b\}} |\nabla u| \, dx. \qedhere
\end{split}
\]
\end{proof}

 We recall a  staircase lemma from \cite[Lemma 2.1]{KO}.

\begin{lemma}\label{lem:Staircase}
Let $F:[0, \infty] \to [0, \infty]$ be a left-continuous and non-decreasing function.  Let $s:=\sup\{t \in [0, \infty] : F(t) < F(\infty)\}$.
If $s>0$, then for every $\ve>0$ there exists an increasing sequence $t_0<t_1<t_2<\ldots$ of elements of $[0, s)$  with $t_0=0$, 
$\lim_{i \to \infty}t_i =s$ such that 
 the inequality $|F(t_i) - F(t)| \le \ve$
holds for all $t \in (t_{i-1}, t_i]$, 
$ i=1, 2,\dots $.
\end{lemma}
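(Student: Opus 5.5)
We build the sequence $(t_i)$ greedily from the left, cutting whenever $F$ has grown by more than $\ve$.

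First we dispose of the degenerate case. If $F(\infty)=\infty$, the conclusion must be read with care. Since $F$ takes values in $[0,\infty]$, left-continuity at $s$ is what makes $\lim_{i\to\infty}t_i=s$ consistent. The construction itself only needs finiteness of $F$ on $[0,s)$. If $F(t)=\infty$ for some $t<s$, then monotonicity gives $F=\infty=F(\infty)$ beyond $t$, which contradicts the definition of $s$. Hence $F(t)<\infty$ for every $t<s$, and also $F(t)<F(\infty)$ for $t<s$.

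The recursive choice is as follows. Put $t_0=0$. Given $t_{i-1}<s$, define
\[
t_i:=\sup\{t\in[t_{i-1},s) : F(t)\le F(t_{i-1}^{+})+\ve\}.
\]
Here $F(t_{i-1}^+)$ is the right limit, which exists by monotonicity. Working with the right limit rather than $F(t_{i-1})$ is necessary, because $F$ may jump just to the right of $t_{i-1}$. For $t\in(t_{i-1},t_i]$, monotonicity and left-continuity at $t_i$ give $F(t_{i-1}^+)\le F(t)\le F(t_i)\le F(t_{i-1}^+)+\ve$. It follows that $|F(t_i)-F(t)|\le\ve$.

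The main obstacle is ensuring that $t_i<s$ and that the $t_i$ are strictly increasing.

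\emph{Strict increase.} Since $F(t)\to F(t_{i-1}^+)$ as $t\downarrow t_{i-1}$, the defining set contains points strictly larger than $t_{i-1}$, so $t_i>t_{i-1}$.

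\emph{Staying below $s$.} If the supremum equals $s$, we must avoid landing on $s$. In that case we instead pick any $t_i\in(t_{i-1},s)$ with $t_i\ge s-1/i$ that still lies in the defining set, together with $t_i\ge (t_{i-1}+s)/2$. The inequality $F(t)\le F(t_{i-1}^+)+\ve$ holds on the whole interval $(t_{i-1},t_i]$ because the set is an interval, and this still yields the estimate.

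\emph{Convergence $t_i\to s$.} Suppose instead that $t_i\uparrow t^*<s$. In every step where the supremum is attained strictly before $s$, $F$ must exceed $F(t_{i-1}^+)+\ve$ just after $t_i$. Hence $F(t_{i+1})\ge F(t_i^+)>F(t_{i-1}^+)+\ve$, and $F$ gains at least $\ve$ every two steps. Therefore $F(t^*)\ge\lim F(t_i)=\infty$, which contradicts the finiteness of $F$ on $[0,s)$ established above. In the steps where the supremum equals $s$, the halving rule forces convergence to $s$ directly. Combining the two types of step, we get $t_i\to s$.

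Finally, the case $s=\infty$ is handled identically: we replace $s-1/i$ by $i$ and the midpoint rule by $t_i\ge t_{i-1}+1$.
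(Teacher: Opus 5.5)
The paper gives no proof of this lemma; it imports it from Kangasniemi and Onninen \cite[Lemma~2.1]{KO}. So your greedy construction is not a variant of an argument in the paper but a self-contained substitute, and it is correct. Its main steps are sound:
\begin{itemize}
\item finiteness of $F$ on $[0,s)$;
\item the cut point $\sigma_i=\sup\{t\in[t_{i-1},s):F(t)\le F(t_{i-1}^+)+\ve\}$, with the right limit as the reference level;
\item left-continuity, which places $\sigma_i$ in that set when $\sigma_i<s$;
\item the $\ve$-gain argument, which forces $t_i\to s$.
\end{itemize}
Compared with the bare citation, your argument shows where each hypothesis enters. Left-continuity is genuinely needed: a right-continuous unit jump inside $[0,s)$ defeats the lemma for $\ve<1$.

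Two small repairs are needed. First, when $\sigma_i<s$ you only know $F(t)>F(t_{i-1}^+)+\ve$ for $t>t_i$, which gives $F(t_i^+)\ge F(t_{i-1}^+)+\ve$, not a strict inequality; take $F(t)=t$ to see that equality can occur. The non-strict version still gives a gain of $\ve$ in the right limits at every such step, and that is enough for the contradiction with $F(t^*)<\infty$.

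Second, the ``combining'' step is cleanest if you note that the reference levels $F(t_{i-1}^+)+\ve$ are non-decreasing in $i$. Hence once some step has $\sigma_i=s$, every later step does too. Either all steps are of the first kind and the gain argument applies, or from some index on all are of the second kind and the halving rule gives $t_i\to s$.

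The opening remarks about $F(\infty)=\infty$ and left-continuity ``at $s$'' play no role in the argument and can be deleted.
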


Kangasniemi and Onninen proved the superlevel Sobolev inequality  with Lebesgue measure in \cite[Proposition~1.7]{KO}. 
Their result corresponds to  the case  $\kappa =0$ in  Theorem~\ref{thm:superlevelS}. Following their ideas we generalise the superlevel Sobolev inequality to the  case for  Hausdorff content with lower dimensions.

\begin{theorem}\label{thm:superlevelS}
Let $\kappa \in [0, 1]$. Then there exists a constant $c>0$ such that
\begin{equation}\label{SuperlevelChoquet}
\|u\|_{\infty} \le c  \int_{\Rn} \frac{|\nabla u (x)|}{\Big(\Ha^{n- \kappa}_\infty \big(\{y \in \Rn : u(y) \ge u(x)\}\big) \Big)^{\frac{n -1}{n- \kappa}}} \, dx
\end{equation}
for all non-negative, continuous  functions  $u \in W^{1, 1}_0(\Rn)$ with compact support in $\Rn$.
\end{theorem}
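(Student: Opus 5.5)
We follow the Kangasniemi--Onninen argument, with Lemma~\ref{lem:KO3.5} replacing \cite[Lemma~3.5]{KO}. Fix a non-negative continuous $u \in W^{1,1}_0(\Rn)$ with compact support and $\|u\|_\infty>0$. Since $u$ is continuous, $\|u\|_\infty=\max u$. Put
\[
F(t):=\Ha^{n-\kappa}_\infty\big(\{x\in\Rn : u(x)\ge t\}\big)^{\frac{n-1}{n-\kappa}}, \qquad t\in[0,\infty].
\]
The weight in \eqref{SuperlevelChoquet} is $F(u(x))^{-1}$.

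Now $F$ is non-increasing, and the superlevel sets $\{u\ge t\}$ are compact and decrease as $t$ increases. By the outer capacity property of $\Ha^{n-\kappa}_\infty$ recalled in Section~\ref{sec:Pre}, $F$ is left-continuous. To get the non-decreasing, left-continuous form required in Lemma~\ref{lem:Staircase}, we either reverse the variable or bound $1/F$. Concretely, we apply the staircase lemma to $G(t):=-F(t)$ suitably shifted, or to $1/F$ on $[0,\|u\|_\infty)$, where $F>0$. We then choose levels $0=t_0<t_1<\dots\to s=\|u\|_\infty$ such that $F$ oscillates by at most $\ve$ on each block $(t_{i-1},t_i]$. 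Multiplicatively, we want $F(t)\le(1+\ve)F(t_i)$ for $t\in(t_{i-1},t_i]$.

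On each block we apply Lemma~\ref{lem:KO3.5} with $a=t_{i-1}$, $b=t_i$. Since $F(u(x))\le(1+\ve)F(t_i)$ when $t_{i-1}<u(x)<t_i$, we get
\[
t_i-t_{i-1}\le \frac{c}{F(t_i)}\int_{\{t_{i-1}<u<t_i\}}|\nabla u|\,dx
\le c(1+\ve)\int_{\{t_{i-1}<u<t_i\}}\frac{|\nabla u(x)|}{F(u(x))}\,dx.
\]
Summing over $i$, the left side telescopes to $s=\|u\|_\infty$. The sets $\{t_{i-1}<u<t_i\}$ are disjoint, so the right side is bounded by $c(1+\ve)$ times the integral in \eqref{SuperlevelChoquet}. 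The level sets $\{u=t_i\}$ do no harm, because $\nabla u=0$ a.e.\ on each of them. Letting $\ve\to0$ gives the claim.

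The main obstacle is the staircase step. The additive control $|F(t_i)-F(t)|\le\ve$ given by Lemma~\ref{lem:Staircase} must be turned into a multiplicative bound $F(t)/F(t_i)\le 1+\ve'$. This is delicate near $s$, where $F(t_i)$ may tend to $\Ha^{n-\kappa}_\infty(\{u\ge\|u\|_\infty\})^{\frac{n-1}{n-\kappa}}$, which may be zero. We would try first to apply the staircase lemma to $\log F$, or to $1/F$, after reparametrising $t\mapsto$ the left-continuous non-decreasing function $t\mapsto 1/F(\|u\|_\infty-t)$ (checking that left-continuity is preserved). Then a bounded additive oscillation of $\log F$ on each block gives exactly the multiplicative bound needed.

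A simpler fallback avoids the staircase entirely. Take a geometric partition of levels $t_i$, and on each block use $F(u(x))\le F(t_{i-1})$. This only bounds $\sum_i (t_i-t_{i-1})F(t_i)/F(t_{i-1})$, so it needs the ratio control anyway; the staircase choice is therefore the natural route.
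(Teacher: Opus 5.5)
Your skeleton is the paper's: staircase levels $t_i\nearrow\|u\|_\infty$, Lemma~\ref{lem:KO3.5} with $a=t_{i-1}$, $b=t_i$, then summing over the disjoint sets $\{t_{i-1}<u<t_i\}$. Your argument can be completed, but the ``main obstacle'' you identify is self-imposed. The paper takes your own second option. It applies Lemma~\ref{lem:Staircase} to $1/F$ (in your notation), which is already non-decreasing and left-continuous, and uses only the \emph{additive} bound $1/F(t_i)\le 1/F(u(x))+\ve$ for $t_{i-1}<u(x)\le t_i$. Summing gives $\|u\|_\infty\le c\int_\Rn |\nabla u|/F(u)\,dx+c\ve\int_\Rn|\nabla u|\,dx$, and the last term vanishes as $\ve\to0$ because $u\in W^{1,1}_0(\Rn)$. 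So no ratio control is needed. The possibility $F(t_i)\to0$ near $\|u\|_\infty$ is harmless: it only means $1/F(t_i)\to\infty$, which the staircase lemma accommodates. Your multiplicative version also works, provided you apply Lemma~\ref{lem:Staircase} to $\log(1/F)$ directly, with no reparametrisation. That function is non-decreasing and left-continuous, and it is bounded below on $(0,\infty]$ since $\{u\ge t\}\subset\operatorname{supp}u$ for $t>0$. Add a constant and redefine it at $t=0$, where $F(0)=\Ha^{n-\kappa}_\infty(\Rn)^{\frac{n-1}{n-\kappa}}=\infty$; then it fits the lemma and yields $F(t)\le e^{\ve}F(t_i)$ on each block. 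Any fixed $\ve$ then gives the theorem with constant $ce^{\ve}$, but this buys nothing essential over the additive route. Drop the reparametrisation $t\mapsto 1/F(\|u\|_\infty-t)$ you suggest. That function is non-increasing, and reversing the variable turns left-continuity into right-continuity, so it does not satisfy the hypotheses of Lemma~\ref{lem:Staircase}.
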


\begin{proof}
We may  assume that $\|u\|_\infty >0$. Let us write
\[
F(t):= \frac{1}{\Big(\Ha^{n- \kappa}_\infty(\{u(y) \ge t\}) \Big)^{\frac{n -1}{n- \kappa}}}, \quad t\geq 0.
\]
Thus, $F(t) =\infty$ if $t > \|u\|_\infty$, and $F(t) \in (0, \infty)$ if $0\le t < \|u\|_\infty$.
Let $t$ be fixed.
Since $u $ is a continuous function  with a compact support,  the set  $\{ u(y) \ge t\}$ is closed and bounded i.e.\ compact. 
Hence for any increasing sequence $(t_i)$ converging to $t$,  the  sets $(\{u(y) \ge t_i\})_i$  form  a decreasing sequence of compact sets. Thus,
\[
\lim_{i \to \infty} \Ha^{n- \kappa p}_\infty(\{u(y) \ge t_i\}) = 
\Ha^{n- \kappa p}_\infty\bigg(\bigcap_i \{u(y) \ge t_i\} \bigg) 
=  \Ha^{n- \kappa p}_\infty( \{u(y) \ge t\}),
\] 
where the last inequality follows since $t_i \nearrow t$. Hence, the function  $F$ is left-continuous. 
Since the Hausdorff content is monotone, 
the inequality
$\Ha^{n- \kappa p}_\infty(\{u(y) \ge t_2\}) \le \Ha^{n- \kappa p}_\infty(\{u(y) \ge t_1\})$  holds whenever $t_1<t_2$ and thus $F(t_1) \le F(t_2)$ i.e. 
the function
$F$ is non-decreasing. 
Let us choose $s:= \|u\|_\infty$   in  Lemma \ref{lem:Staircase}  and let $\epsilon >0$ be given.
Hence,
there exists a strictly increasing sequence $(t_i)$ converging to $\|u\|_\infty$ such that $t_0=0$ and $F(t_{i}) \le F(t) + \ve$ for all $t \in (t_{i-1}, t_i]$, { $i=1,2,\dots $.

Lemma~\ref{lem:KO3.5}  yields  that
\[
\|u\|_\infty = \sum_{i=0}^\infty (t_{i+1} - t_i)
\le \sum_{i=0}^\infty c F(t_{i+1}) \int_{\{t_i<u(x)<t_{i+1}\}} |\nabla u| \, dx. 
\]
By Lemma~\ref{lem:Staircase}
for the sets
$\{t_i<u(x)\le t_{i+1}\}$}
we have $F(t_{i+1}) \le F(u(x)) + \ve$.
Thus,
\[
\|u\|_\infty 
\le c \sum_{i=0}^\infty   \int_{\{ t_i<u(x)\le t_{i+1}\}} (F(u(x)) + \ve) |\nabla u| \, dx.
\]
Since the sets $\{t_i<u(x)\le t_{i+1}\}$ , i=1,2,\dots , are disjoint, the properties of Lebesgue integral yield that
\[
\|u\|_\infty \le c   \int_{\Rn} F(u(x))|\nabla u| \, dx + \ve \int_\Rn |\nabla u| \, dx.
\]
Since $\int_\Rn |\nabla u| \, dx <\infty$, the claim  of the theorem follows by letting $\ve \to 0^+$.
\end{proof}

We recall that there are 
non-negative, continuous  functions  $u \in W^{1, 1}_0(\Rn)$  which do not have compact support in $\Rn$, for example $u:\Rn\to\R$,  $u(x)=\exp{(-\vert x\vert ^n)}$.

We  note  that the  nominator $\Ha^{n- \kappa}_\infty \big(\{y \in \Rn : u(y) \ge u(x)\}\big)$ in Theorem~\ref{thm:superlevelS} is positive outside a set of zero  $\Ha^{n-\kappa}_\infty$-content.

\begin{remark} 
 Let $x_0\in\Rn$ be fixed.
If $u(x_0)< \|u\|_\infty$, then
 there exits a ball centred at $x_0$ in the set
$\{y \in \Rn : u(y) \ge u(x_0)\}$ 
by the continuity of 
the function $u$.
Hence  $\Ha^{n- \kappa}_\infty \big(\{y \in \Rn : u(y) \ge u(x_0)\}\big)>0$. 
If we write  $E:= \{x\in \Rn: u(x) = \|u\|_\infty\}$  and if   $\Ha^{n-\kappa}_\infty(E)>0$, then the nominator $\Ha^{n- \kappa}_\infty \big(\{y \in \Rn : u(y) \ge u(x)\}\big)$  is positive everywhere. 
If $\Ha^{n-\kappa}_\infty(E)=0$, then the nominator $\Ha^{n- \kappa}_\infty \big(\{y \in \Rn : u(y) \ge u(x)\}\big)$  is positive  outside a set of zero  $\Ha^{n-\kappa}_\infty$-content.
\end{remark}

We show that the right-hand side of the inequality \eqref{SuperlevelChoquet}  in Theorem~\ref{thm:superlevelS} can be replaced by a lower dimensional Choquet integral, but the result might be weaker than the original one
\eqref{SuperlevelChoquet}.

\begin{remark} 
Applying the inequalities  \eqref{compare_int} and Lemma~\ref{GeneralizationOV} to the gradient part on the right-hand side of the inequality \eqref{SuperlevelChoquet} in
Theorem~\ref{thm:superlevelS} yields that
\[
\|u\|_{\infty} \le c  \bigg(\int_{\Rn} \frac{|\nabla u (x)|^{\frac{n-\kappa}{n}}}{\Big(\Ha^{n- \kappa}_\infty \big(\{y \in \Rn : u(y) \ge u(x)\}\big) \Big)^{\frac{n -1}{n}}} \, \Ha^{n-\kappa}_\infty\bigg)^{\frac{n}{n-\kappa}}
\]
for all non-negative, continuous  functions  $u \in W^{1, 1}_0(\Rn)$ with compact support in $\Rn$.\\
\end{remark}

 Next we compare  the inequality \eqref{SuperlevelChoquet}  in Theorem~\ref{thm:superlevelS} to  the theorem by Kangasniemi and Onninen, \cite[Proposition~1.7]{KO}.
 
\begin{remark} 
Let $0<\delta_1<\delta_2 \le n$.
 Let $E$ be a set in  $\Rn$ and  $\ve >0$.
 We pick up a ball covering $(B(x_i, r_i))$  according to  the definition of $\Ha^{\delta_1}_\infty(E)$ such that 
$\sum_i r_i^{\delta_1}<\Ha^{\delta_1}_\infty(E) + \ve$. Since $t \mapsto t^{\delta_1/\delta_2}$ is strictly concave, we have
\[
\Big(\Ha^{\delta_2}(E) \Big)^{\frac{\delta_1}{\delta_2}} \le \Big( \sum_i r_i^{\delta_2}\Big)^{\frac{\delta_1}{\delta_2}} < \sum_i r_i^{\delta_1} \le \Ha^{\delta_1}_\infty(E) + \ve.
\]
 Letting $\ve \to 0$ implies  that $\Ha^{\delta_2}(E)^{\frac1{\delta_2}}  \le \Ha^{\delta_1}_\infty(E)^{\frac1{\delta_1}}$.
 Then,
 choosing $\delta_1= n- \kappa$ and $\delta_2=n$ yields for every $\kappa \in [0, 1]$ that
\[
\begin{split}
&\int_{\Rn} \frac{|\nabla u (x)|}{\Big(\Ha^{n- \kappa}_\infty \big(\{y \in \Rn : u(y) \ge u(x)\}\big) \Big)^{\frac{n -1}{n- \kappa}}} \, dx \\
&\quad \le \int_{\Rn} \frac{|\nabla u (x)|}{\Big(\Ha^{n}_\infty \big(\{y \in \Rn : u(y) \ge u(x)\}\big) \Big)^{\frac{n -1}{n}}} \, dx\\
&\qquad\approx \int_{\Rn} \frac{|\nabla u (x)|}{|\{y \in \Rn : u(y) \ge u(x)\}\big)|^{\frac{n -1}{n}}} \, dx.
\end{split}
\]
This means  that the inequality \eqref{SuperlevelChoquet} in Theorem~\ref{thm:superlevelS} 
with $\kappa\in (0,1]$
might be sharper than the theorem by Kangasniemi and Onninen, \cite[Proposition~1.7]{KO} which corresponds to the case $\kappa =0$.
\end{remark}


%
%
%
%
%

\bibliographystyle{amsalpha}

\end{document}